\newtheorem{theorem}{Theorem}
\newtheorem{proposition}{Proposition}
\newtheorem{assumption}{Assumption}
\crefname{assumption}{assumption}{assumptions}
\begin{document}

\begin{frontmatter}

\title{Multi-market Oligopoly of Equal Capacity}

\author[usc,samsi]{Ruda Zhang\corref{cor1}}
\cortext[cor1]{Corresponding author.}
\ead{rudazhan@usc.edu}

\author[usc]{Roger Ghanem}
\ead{ghanem@usc.edu}

\affiliation[usc]{
  organization={Department of Civil and Environmental Engineering, University of Southern California},
  city={Los Angeles},
  state={CA},
  postcode={90089},
  country={USA}}
          
\affiliation[samsi]{
  organization={The Statistical and Applied Mathematical Sciences Institute},
  city={Durham},
  state={NC},
  postcode={27703},
  country={USA}}

\begin{abstract}
We consider a variant of Cournot competition,
where multiple firms allocate the same amount of resource across multiple markets.
We prove that the game has a unique pure-strategy Nash equilibrium (NE),
which is symmetric and is characterized by the maximal point of a ``potential function''.
The NE is globally asymptotically stable under the gradient adjustment process,
and is not socially optimal in general.
An application is in transportation, where drivers allocate time over a street network.
\end{abstract}

\begin{keyword}
  Cournot oligopoly; multi-market; multi-product; capacity constraint; learning; Lyapunov function
\end{keyword}

\end{frontmatter}

\section{Introduction}

Cournot oligopoly is a classic model of economic competition,
see \cite{Vives1999} for a modern game-theoretic review of oligopoly theory.
\citet{Selten1970} introduced multiproduct oligopoly models,
see also \cite{Baumol1982, DeFraja1996}.
A similar term is multimarket oligopoly,
which often involves transportation cost among spatially separated markets.
Besides studying the existence and uniqueness of Nash equilibrium,
it is also important to understand the stability of the equilibrium
under different dynamic adjustment processes.
For a multiproduct oligopoly model, \cite{ZhangAM1996} gave sufficient and necessary conditions
for the locally stability of equilibrium under best-response dynamics.

The discussion is more difficult in the presence of capacity constraints.
\citet{Okuguchi1999} studied the equilibrium and dynamics of
multiproduct oligopoly with capacity constraints.
In particular, for asymmetric firms with compact, convex capacities
such that their payoffs are concave, they showed that Nash equilibrium exists.
If the equilibrium is unique, they provided sufficient conditions
for its global asymptotic stability under a dynamic process,
where players adjust strategies proportionally to their expected marginal profit,
given their expectations of the total output of opponents
(also known as expectations a la Cournot) \citep[Thm 6.4.2, 6.4.3]{Okuguchi1999}.
Their used Lyapunov functions to prove dynamic stability.
\citet{Laye2008} proved the uniqueness of equilibrium
if the demand functions are linear and the cost functions are convex,
because the game is equivalent to a convex optimization problem.
Other forms of capacity constraints have been discussed,
such as transmission capacity constraints \cite{Cunningham2002}
or capacity constraints across multiple time period
\cite{Besanko2004, %
  Ishibashi2008}. %
More papers deal with two-period duopoly using mixed strategy equilibrium
\cite{Berg2012, ChenYH2015}.

In this paper we consider multimarket oligopoly with capacity constraints and symmetric firms.
Unlike \cite{Laye2008}, we allow for concave production functions,
which gives non-increasing demand functions.
A situation where symmetry arises is when the players allocate time across different markets.
Consider a fishery that gives out license for fishing.
All fishermen receive standard device so that, all things being equal,
they have no difference in productivity.
Fish abundance differs spatially at the fishery,
so the fishermen can strategize on their allocation of time at different locations.
Another example is the taxi industry,
where drivers search on a street network for passengers \cite{ZhangRD2020dsp,ZhangRD2020driver}.
For such a multimarket oligopoly, we study its Nash equilibrium of the static game,
its stability under dynamic processes, and its economic efficiency.

The rest of this paper is organized as follows.
In \Cref{sec:game}, we define multi-market oligopoly of equal capacity.
In \Cref{sec:equilibrium}, we prove that the game has a unique, symmetric Nash equilibrium.
In \Cref{sec:stability}, we prove that the NE is globally asymptotically stable
under the gradient adjustment process, and discuss other learning rules.
In \Cref{sec:efficiency}, we discuss when the NE is socially optimal or not,
and conclude with a takeaway.

\section{Game Setup} %
\label{sec:game}

In this section we formalize the game of multi-market competition among firms of equal capacity,
denoted as $\mathcal{G}$.
For the convention of notations,
we use subscript $i$ for an individual firm (or player),
subscript $-i$ for the opponents of firm $i$,
and subscript $x$ to denote a market (or product).
Boldface denotes a vector. Single subscript indicates summation.

Player strategy.
Let $N = \{1, \dots, n\}$ be a set of firms and $E = \{1, \dots, m\}$ be a set of markets.
Each firm $i \in N$ has a unit of resource,
and allocates $s_{ix} \in [0, 1]$ amount of resource in a market $x \in E$
so that $\sum_{x=1}^m s_{ix} = 1$.
The strategy of a firm is the vector of resource distribution $\mathbf{s}_i = (s_{ix})_{x=1}^m$,
which is in its strategy space $S_i = \Delta^{m-1}$.
Here, $\Delta^{m-1} = \{\mathbf{v} \in \mathbb{R}^m : \mathbf{v} \ge 0,
\mathbf{v}^{\text{T}} \mathbf{1} = 1\}$ is the $(m-1)$-dimensional simplex.
Denote strategy profile $\mathbf{S} = (\mathbf{s}_i)_{i=1}^n$,
considered as a vector of length $m \times n$.
Denote strategy space $S = \prod_{i=1}^n S_i$.

Production functions.
The total revenue $u_x$ in a market depends on
the total resource $s_x = \sum_{i=1}^n s_{ix}$ allocated in the market.
Without loss of generality, for each $x \in E$, let
$u_x(s_x): \mathbb{R}_{\ge 0} \mapsto \mathbb{R}_{\ge 0}$ and $u_x(0) = 0$.

Player payoff.
Assume that revenue in each market is distributed proportionally to resource allocation,
and all players have the same cost function $c(\mathbf{v}): \Delta^{m-1} \mapsto \mathbb{R}_{\ge 0}$,
where $\mathbf{v} \in \Delta^{m-1}$.
Because player payoff $u_i$ is the total revenue a player gets from all the markets minus the cost,
it can be written as:
\begin{equation}
  \label{eq:player-payoff}
  u_i(\mathbf{s}_i; \mathbf{s}_{-i}) = \sum_{x=1}^m p_x(s_x) s_{ix} - c(\mathbf{s}_i)
\end{equation}
Here, $p_x: \mathbb{R}_{> 0} \mapsto \mathbb{R}_{\ge 0}$ is the revenue per investment in a market,
defined by $p_x(s_x) = u_x / s_x$;
and $\mathbf{s}_{-i} = \sum_{j \in N}^{j \ne i} \mathbf{s}_j$
is the aggregate strategy of the opponents.
Note that $s_x = s_{ix} + s_{-ix}$.
Denote the aggregate strategy space of the opponents as
$S_{-i} = (n-1) \Delta^{m-1}$, then $\mathbf{s}_{-i} \in S_{-i}$.

Auxiliary functions.
With aggregate strategy $\mathbf{s} = \sum_{i=1}^n \mathbf{s}_i \in n \Delta^{m-1}$,
define marginal player payoff in a market at equilibrium $\phi_x(\mathbf{s})$
and potential function $\Phi(\mathbf{s})$:
\begin{equation}
  \label{eq:equilibrium-marginal-payoff}
  \begin{aligned}
    \phi_x(\mathbf{s}) &= p_x(s_x) + p'_x(s_x) s_x/n - \partial_x c(\mathbf{s}/n) \\
    &= \left(1 - \frac{1}{n}\right) \frac{u_x(s_x)}{s_x} + \frac{1}{n} u'_x(s_x)
    - \partial_x c(\mathbf{s}/n)
  \end{aligned}
\end{equation}
\begin{equation}
  \label{eq:potential}
  \Phi(\mathbf{s}) %
  = \sum_{x=1}^m \left[\left(1 - \frac{1}{n}\right) \int_0^{s_x} \frac{u_x(t)}{t}~\mathrm{d}t
  + \frac{1}{n} u_x(s_x) \right] - n c(\mathbf{s}/n)
\end{equation}
Here, $\partial_x$ is the partial derivative operator with respect to the $x$-th variable.
Note that %
$\nabla \Phi = (\phi_x)_{x=1}^m$.

We impose the following assumption on function forms.

\begin{assumption}
  \label{ass:function}
  For all $x \in E$, $u_x(s_x)$ is concave and differentiable; %
  $c(\mathbf{v})$ is convex and differentiable;
  at least either all but one $u_x(s_x)$ are strictly concave
  or $c(\mathbf{v})$ is strictly convex.
\end{assumption}

Note that this assumption is quite generic.
Because of the law of diminishing returns,
production functions are usually concave, %
and cost functions are usually convex. %
We require differentiability so that $\phi_x(s_x)$ is well-defined.
The strict concavity or convexity requirement is easy to satisfy as well.
For example, \cite{ZhangRD2020driver} shows that the expected revenue as a function of
vehicle supply on a street segment is increasing, strictly concave, and smooth.
Given this assumption, it is easy to see that $p_x(s_x) = u_x / s_x$ is (decreasing) non-increasing.
Notice that we use parentheses for optional conditions that strengthen a statement.
We adopt this convention throughout this paper for a tighter presentation.

Multi-market oligopoly of equal capacity is defined as the game $\mathcal{G} = (N, S, \mathbf{u})$,
where %
payoffs $\mathbf{u} = (u_i)_{i=1}^n$,
such that \Cref{ass:function} hold.
Later we will see that, unlike the Cournot game, it is not an aggregative game \cite{Selten1970}
or even a generalized aggregative game \cite{Jensen2010,Cornes2012}.
It is also not a strictly convex game \cite{Rosen1965} or a potential game \cite{Monderer1996b},
although it could be considered as a Lyapunov game \cite{Clempner2018}.
In the following sections we derive equilibrium and stability results
without referencing these classes of games.

\section{Equilibrium}
\label{sec:equilibrium}

In this section we prove that the game $\mathcal{G}$
has a unique pure-strategy Nash equilibrium (NE),
which is symmetric and is characterized by the unique maximal point of the potential function.
We follow a list of propositions and then give the main result.
Before getting into the details, we point out the keys to the proofs:
convex game guarantees that NE exists;
equal capacity leads to symmetry;
and strictly concave potential function gives a unique solution.

\begin{proposition}
  \label{prop:payoff-concave}
  $u_i(\mathbf{s}_i; \mathbf{s}_{-i})$ is strictly concave,
  $\forall i \in N, \forall \mathbf{s}_{-i} \in S_{-i}$.
\end{proposition}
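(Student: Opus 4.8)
The plan is to exploit the separable structure of the payoff in \eqref{eq:player-payoff}. Since $s_x = s_{ix} + s_{-ix}$ and the opponents' aggregate $\mathbf{s}_{-i}$ is held fixed, I would write the payoff as a sum of single-variable revenue terms plus the cost,
\[
u_i(\mathbf{s}_i; \mathbf{s}_{-i}) = \sum_{x=1}^m r_x(s_{ix}) - c(\mathbf{s}_i), \qquad r_x(s_{ix}) := p_x(s_{ix} + s_{-ix})\, s_{ix},
\]
and prove strict concavity in three steps: (i) each revenue term $r_x$ is concave in its single variable $s_{ix}$, strictly so when $u_x$ is strictly concave; (ii) $-c$ is concave, strictly so when $c$ is strictly convex; and (iii) these are assembled on the simplex $S_i = \Delta^{m-1}$.

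For step (i), fix $a = s_{-ix} \ge 0$ and set $s = s_{ix} + a$, so that $r_x = u_x(s) - a\,p_x(s)$ with $p_x = u_x/s$. Differentiating twice in $s_{ix}$ (equivalently in $s$) and clearing denominators I expect
\[
\partial^2_{s_{ix}} r_x = \frac{u_x''(s)\, s^2 s_{ix} + 2a\,\bigl(u_x'(s)\,s - u_x(s)\bigr)}{s^3}.
\]
Both terms are $\le 0$: the first because $u_x'' \le 0$ by concavity while $s_{ix}, s \ge 0$; the second because $u_x(0)=0$ together with the gradient inequality for concave functions gives $u_x'(s)\,s \le u_x(s)$, while $a \ge 0$. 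Hence $r_x$ is concave, and strictly concave whenever $u_x$ is strictly concave, since the first term is then strictly negative in the interior. Step (ii) is immediate, as $-c$ is concave precisely because $c$ is convex.

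The assembly in step (iii) is where the ``all but one / or $c$ strict'' hypothesis of \Cref{ass:function} does its work, and is the step to get right. On $S_i = \Delta^{m-1}$ the admissible directions $\mathbf{d}$ satisfy $\sum_x d_x = 0$, so any nonzero $\mathbf{d}$ has at least two nonzero components. If all but one $u_x$ are strictly concave, then at least one nonzero component lies in a coordinate with strictly concave $r_x$, which forces the second directional derivative $\sum_x (\partial^2_{s_{ix}} r_x)\, d_x^2$ to be strictly negative; alternatively, strict convexity of $c$ alone already makes $-c$ strictly concave along every such $\mathbf{d}$. In either case $u_i$ is strictly concave on the simplex.

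The main obstacle is not the inequality but the regularity. \Cref{ass:function} posits only that $u_x$ and $c$ are once differentiable, so the second derivative above need not exist pointwise. I would circumvent this by (a) deriving concavity of $r_x$ directly from the first-order condition that
\[
\partial_{s_{ix}} r_x = \frac{s_{ix}}{s}\,u_x'(s) + \frac{a}{s^2}\,u_x(s)
\]
is non-increasing in $s_{ix}$, or (b) mollifying $u_x$ by smooth concave approximants, proving concavity there, and passing to the limit. The second-derivative computation should thus be read as the \emph{formal heart} of the argument, made rigorous by one of these routes.
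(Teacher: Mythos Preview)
Your proposal is correct and follows essentially the same reduction as the paper: both argue that each single-market revenue term $p_x(s_{ix}+s_{-ix})\,s_{ix}$ is (strictly) concave in $s_{ix}$ and then combine with the concave term $-c$. Your treatment is in fact more explicit than the paper's (which omits the computation as ``straightforward but tedious''), and your use of the simplex tangent condition $\sum_x d_x=0$ to explain why strict concavity of \emph{all but one} $u_x$ suffices is precisely the mechanism the paper leaves implicit; the regularity caveat you flag and the mollification workaround are appropriate since \Cref{ass:function} only grants $C^1$.
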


\begin{proof}
  Because simplex $S_i$ is a convex subset of the non-negative cone $\mathbb{R}_{\ge 0}^{m}$,
  if $u_i(\mathbf{s}_i; \mathbf{s}_{-i})$ is strictly concave on $\mathbb{R}_{\ge 0}^{m}$,
  $\forall i, \forall \mathbf{s}_{-i} \in S_{-i}$,
  then $u_i(\mathbf{s}_i; \mathbf{s}_{-i})$ is also strictly concave on $S_i$,
  $\forall i, \forall \mathbf{s}_{-i} \in S_{-i}$.
  It suffices to prove the former statement without constraints on opponent strategies, that is,
  $u_i(\mathbf{s}_i; \mathbf{s}_{-i})$ is strictly concave on $\mathbb{R}_{\ge 0}^{m}$,
  $\forall i, \forall \mathbf{s}_{-i} \in \mathbb{R}_{\ge 0}^{m}$.
  By \Cref{eq:player-payoff} and \Cref{ass:function},
  it suffices to show that $p_x(s_x) s_{ix}$ is (strictly) concave on $\mathbb{R}_{\ge 0}$,
  $\forall x, \forall i, \forall s_{-ix} \ge 0$.
  To simplify notations, it is equivalent to show that
  $f_x(s; c) \equiv p_x(s + c) s$ is (strictly) concave
  on $\mathbb{R}_{\ge 0}$, $\forall x, \forall c \ge 0$.
  Proof by definition is straightforward but tedious, so we do not include the steps here.
  The key to this proof is that $u_x(s_x)$ is (strictly) concave
  and $p_x(s_x)$ is (decreasing) non-increasing.
  Strict concavity is guaranteed if any $u_x(s_x)$ or $c(\mathbf{v})$ is strictly concave/convex.
  This proves \Cref{prop:payoff-concave}.
\end{proof}

\begin{proposition}
  \label{prop:NE-exist}
  $\mathcal{G}$ has NE, all strict.
\end{proposition}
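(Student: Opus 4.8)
The plan is to establish existence by invoking the standard existence theorem for concave games and to obtain strictness directly from the strict concavity already proved in \Cref{prop:payoff-concave}. The substantive work is therefore almost entirely front-loaded into that proposition: once own-strategy strict concavity is in hand, both claims follow with little extra effort.

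First I would verify the hypotheses of the concave-game existence theorem (Debreu--Glicksberg--Fan; see also the existence result, Theorem~1, in \cite{Rosen1965}, noting that we do not need the diagonal strict concavity that \cite{Rosen1965} requires for uniqueness). Each player's strategy space $S_i = \Delta^{m-1}$ is nonempty, compact, and convex. By \Cref{prop:payoff-concave}, each payoff $u_i(\mathbf{s}_i; \mathbf{s}_{-i})$ is strictly concave in the own strategy $\mathbf{s}_i$ for every $\mathbf{s}_{-i} \in S_{-i}$. The remaining hypothesis is joint continuity of $u_i$ on $S$. This is immediate away from the boundary; the only delicate point is where a market carries zero total resource, $s_x = 0$, at which $p_x = u_x/s_x$ is a priori undefined. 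Since $s_x = \sum_i s_{ix}$ with $s_{ix} \ge 0$, the term $p_x(s_x) s_{ix} = u_x(s_x)\, s_{ix}/s_x$ is squeezed between $0$ and $u_x(s_x) \to u_x(0) = 0$, so it extends continuously by $0$ there. In fact the concavity of $f_x(s; c) = p_x(s+c)s$ on all of $\mathbb{R}_{\ge 0}$ used in \Cref{prop:payoff-concave} already presumes exactly this continuous extension. With every hypothesis verified, the theorem yields a pure-strategy Nash equilibrium.

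Next I would argue that every Nash equilibrium is strict. Fix any NE $\mathbf{S}^\ast = (\mathbf{s}_i^\ast)_{i=1}^n$ and any player $i$. By \Cref{prop:payoff-concave}, $u_i(\cdot\,; \mathbf{s}_{-i}^\ast)$ is strictly concave on the convex set $S_i$, hence admits a unique maximizer. Since $\mathbf{s}_i^\ast$ is a best response to $\mathbf{s}_{-i}^\ast$, it must be that unique maximizer, so any unilateral deviation $\mathbf{s}_i \ne \mathbf{s}_i^\ast$ satisfies $u_i(\mathbf{s}_i; \mathbf{s}_{-i}^\ast) < u_i(\mathbf{s}_i^\ast; \mathbf{s}_{-i}^\ast)$. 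This is precisely the definition of a strict equilibrium, and as it holds for every $i \in N$, all NE are strict.

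I expect the main (and only genuine) obstacle to be the continuity check at the boundary $s_x = 0$, since that is the one place where the definition $p_x = u_x/s_x$ cannot be taken at face value and must be handled by continuous extension. Everything else is a routine application of the concave-game machinery, with strictness falling out immediately from \Cref{prop:payoff-concave}.
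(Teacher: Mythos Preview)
Your proposal is correct and follows essentially the same approach as the paper: invoke a concave-game existence theorem (the paper cites \cite{Nikaido1955}, you cite Debreu--Glicksberg--Fan/Rosen) after checking compactness, convexity, continuity, and own-strategy concavity via \Cref{prop:payoff-concave}, and then read off strictness from strict concavity. If anything, you are more careful than the paper about the continuity of $p_x(s_x)s_{ix}$ at $s_x=0$; the paper simply asserts continuity from differentiability without addressing that boundary point.
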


\begin{proof}
  By \Cref{prop:payoff-concave}, $\mathcal{G}$ is a convex game,
  which is a game where each player has a convex strategy space
  and a concave payoff function %
  for all opponent strategies.
  A convex game has NE if it has a compact strategy space and continuous payoff functions
  \cite{Nikaido1955}.
  Because a finite product of simplices is compact,
  the strategy space $S = \prod_{i=1}^n \Delta^{m-1}$ is compact.
  Because $u_x(s_x)$ and $c(\mathbf{s}_i)$ are differentiable by \Cref{ass:function},
  player payoff $u_i(\mathbf{s}_i; \mathbf{s}_{-i})$ is differentiable and thus continuous.
  Therefore, $\mathcal{G}$ has NE.
  Since $u_i(\mathbf{s}_i; \mathbf{s}_{-i})$ is strictly concave, all NEs are strict. %
  This proves \Cref{prop:NE-exist}.
\end{proof}

\begin{proposition}
  \label{prop:NE-symmetric}
  $\mathcal{G}$ can only have symmetric NE.
\end{proposition}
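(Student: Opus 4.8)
The plan is to show that at any NE every pair of players must choose the same strategy, using the first-order (variational-inequality) characterization of each best response together with \Cref{ass:function}. Since each payoff $u_i(\cdot\,;\mathbf{s}_{-i})$ is strictly concave on the convex set $S_i = \Delta^{m-1}$ by \Cref{prop:payoff-concave}, at a NE the strategy $\mathbf{s}_i$ is its unique maximizer and therefore satisfies $\nabla_{\mathbf{s}_i} u_i \cdot (\mathbf{a} - \mathbf{s}_i) \le 0$ for every $\mathbf{a} \in \Delta^{m-1}$, where $(\nabla_{\mathbf{s}_i} u_i)_x = p_x(s_x) + p'_x(s_x) s_{ix} - \partial_x c(\mathbf{s}_i)$. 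Working with this inequality rather than the Lagrangian stationarity equations is what lets me avoid a case analysis over which coordinates $s_{ix}$ vanish on the boundary of the simplex.

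Next I would fix two players $i$ and $j$ and feed each the other's equilibrium strategy as the test point: take $\mathbf{a} = \mathbf{s}_j$ in player $i$'s inequality and $\mathbf{a} = \mathbf{s}_i$ in player $j$'s. Writing $\mathbf{d} = \mathbf{s}_i - \mathbf{s}_j$ and adding the two inequalities, the shared terms $p_x(s_x)$ cancel and I obtain
\begin{equation*}
-\sum_{x=1}^m p'_x(s_x)\, d_x^2 + \big(\nabla c(\mathbf{s}_i) - \nabla c(\mathbf{s}_j)\big) \cdot \mathbf{d} \le 0 .
\end{equation*}
Both summands on the left are nonnegative: $p'_x(s_x) \le 0$ because $p_x$ is non-increasing, so $-p'_x(s_x)\,d_x^2 \ge 0$; and $(\nabla c(\mathbf{s}_i) - \nabla c(\mathbf{s}_j)) \cdot (\mathbf{s}_i - \mathbf{s}_j) \ge 0$ by monotonicity of the gradient of the convex function $c$. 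Hence each term must vanish.

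Finally I would invoke the strictness built into \Cref{ass:function}. If $c$ is strictly convex, vanishing of the second term already forces $\mathbf{s}_i = \mathbf{s}_j$. Otherwise all but one $u_x$ are strictly concave, and a short lemma using $u_x(0) = 0$ shows that strict concavity of $u_x$ implies $p'_x(s_x) < 0$ whenever $s_x > 0$; then vanishing of the first term forces $d_x = 0$ in every such market, while $d_x = 0$ holds automatically wherever $s_x = 0$. The single remaining coordinate is pinned down by the simplex constraint $\sum_x d_x = \mathbf{1}^{\text{T}}(\mathbf{s}_i - \mathbf{s}_j) = 0$. Either way $\mathbf{s}_i = \mathbf{s}_j$, and since $i$ and $j$ are arbitrary the NE is symmetric.

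I expect the main obstacle to be the boundary bookkeeping: ensuring the argument survives when players place zero weight on some markets. The variational-inequality form of optimality is precisely what defuses this, since it never requires stationarity to hold coordinate-by-coordinate; the only genuinely new ingredient is the elementary fact that $u_x(0) = 0$ together with strict concavity makes $p_x$ strictly decreasing.
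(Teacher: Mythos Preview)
Your argument is correct and arrives at exactly the same key inequality as the paper, namely $\sum_x |p'_x(s_x)|\,d_x^2 + (\nabla c(\mathbf{s}_i)-\nabla c(\mathbf{s}_j))\cdot\mathbf{d}\le 0$, and then finishes with the same strictness dichotomy from \Cref{ass:function}. The only difference is packaging: the paper writes out the KKT system with multipliers $(\boldsymbol{\lambda}_i,\nu_i)$ and splits markets into $E_+$ and $E_-$ before summing, whereas you use the equivalent variational-inequality form $\nabla_{\mathbf{s}_i}u_i\cdot(\mathbf{a}-\mathbf{s}_i)\le 0$ and test directly with $\mathbf{a}=\mathbf{s}_j$, which is a bit cleaner on the boundary but not a genuinely different route.
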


\begin{proof}
Given a NE $\mathbf{S}^*$, for all player $i$,
the equilibrium strategy $\mathbf{s}_i^*$ solves the following optimization problem,
which is convex by \Cref{prop:payoff-concave}:
\begin{equation}
  \label{eq:optimization}
  \begin{aligned}
    & \text{maximize} && u_i(\mathbf{s}_i; \mathbf{s}_{-i}^*) \\
    & \text{subject to}  && \mathbf{s}_i \ge 0 \\
    &                    && \mathbf{s}_i \cdot \mathbf{1} = 1
  \end{aligned}
\end{equation}
Since this convex optimization problem is strictly feasible,
by Slater's theorem, it has strong duality. 
Since the objective function $u_i(\mathbf{s}_i; \mathbf{s}_{-i}^*)$ is differentiable,
the Karush-Kuhn-Tucker (KKT) theorem states that optimal points of the optimization problem
is the same with the solutions of the KKT conditions:
\begin{equation}
  \label{eq:kkt}
  \begin{aligned}
    &\nabla u_i + \boldsymbol{\lambda}_i - \nu_i \mathbf{1} = 0 && \text{(saddle point)} \\
    &\mathbf{s}_i \ge 0                       &&\text{(primal constraint 1)} \\
    &\boldsymbol{\lambda}_i \ge 0             &&\text{(dual constraint)} \\
    &\mathbf{s}_i \circ \boldsymbol{\lambda}_i = 0     &&\text{(complementary slackness)} \\
    &\mathbf{s}_i \cdot \mathbf{1} = 1        &&\text{(primal constraint 2)}
  \end{aligned}
\end{equation}
Here, operator $\circ$ denotes the Hadamard product: $(x \circ y)_i = x_i y_i$.
Given the saddle point conditions
$\partial u_i / \partial s_{ix} + \lambda_{ix} - \nu_i = 0, \forall x$,
the dual constraint implies that the marginal payoff for player $i$ in market $x$ is bounded above:
$\partial u_i / \partial s_{ix} \le \nu_i, \forall x$.
If player $i$ invests in market $x$, $s_{ix}^* > 0$,
by complementary slackness the upper bound is tight,
$\partial u_i / \partial s_{ix} = \nu_i$,
which means the marginal payoffs for player $i$ are uniform in all markets where $i$ invests.
From \Cref{eq:player-payoff}, marginal player payoffs have the form:
\begin{equation}
  \label{eq:marginal-player-payoff}
  \frac{\partial u_i}{\partial s_{ix}}(\mathbf{s}_i; s_{-ix}) =
  p_x(s_x) + p'_x(s_x) s_{ix} - \partial_x c(\mathbf{s}_i)
\end{equation}
Since $p'_x \le 0$, we have
$p_x(s_x^*) \le \nu_i + |p'_x(s_x^*)| s_{ix}^* + \partial_x c(\mathbf{s}_i), \forall x$,
with equality if $s_{ix}^* > 0$.
If player $i$ invests more in market $x$ than player $j$ does, $s_{ix}^* > s_{jx}^* \ge 0$,
this implies
\begin{equation}
  \label{eq:invest-more}
  \nu_i + |p'_x(s_x^*)| s_{ix}^* + \partial_x c(\mathbf{s}_i^*) \le
  \nu_j + |p'_x(s_x^*)| s_{jx}^* + \partial_x c(\mathbf{s}_j^*)
\end{equation}
But because the players have the same capacity,
player $i$ must have invested less in some market $y$ than player $j$ does:
$s_{jy}^* > s_{iy}^* \ge 0$,
which implies
\begin{equation}
  \label{eq:invest-less}
  \nu_j + |p'_y(s_y^*)| s_{jy}^* + \partial_y c(\mathbf{s}_j^*) \le
  \nu_i + |p'_y(s_y^*)| s_{iy}^* + \partial_y c(\mathbf{s}_i^*)
\end{equation}
Denote $\boldsymbol{\delta} = \mathbf{s}_i^* - \mathbf{s}_j^*$,
$E_+ = \{x \in E : \delta_x \ge 0\}$, and $E_- = \{y \in E : \delta_y < 0\}$.
For all $x \in E_+$, multiply $\delta_x$ to \Cref{eq:invest-more}.
For all $y \in E_-$, multiply $|\delta_y|$ to \Cref{eq:invest-less}.
Sum these inequalities together and note that
$\sum_{x \in E_+} \delta_x = \sum_{y \in E_-} |\delta_y|$, we have:
\begin{equation}
  \label{eq:contradict}
  \sum_{x \in E} |p'_x(s_x^*)| \delta_x^2 +
  \sum_{x \in E} (\partial_x c(\mathbf{s}_i^*) - \partial_x c(\mathbf{s}_j^*)) \delta_x \le 0
\end{equation}
If all but one $u_x(s_x)$ are strictly concave, then all but one $p_x'(s_x)$ are nonzero.
Since at least two entries of $\boldsymbol{\delta}$ are nonzero,
this means that the first term in \Cref{eq:contradict} is positive.
Note that the second term can be written as
$(\mathbf{s}_i^* - \mathbf{s}_j^*) \cdot (\nabla c(\mathbf{s}_i^*) - \nabla c(\mathbf{s}_j^*))$.
If $c(\mathbf{v})$ is strictly convex, then equivalently
$c(\mathbf{v}) - c(\mathbf{w}) - \nabla c(\mathbf{w}) \cdot (\mathbf{v} - \mathbf{w}) > 0$,
$\forall \mathbf{v}, \mathbf{w} \in \Delta^{m-1}, \mathbf{v} \ne \mathbf{w}$,
which means $(\nabla c(\mathbf{v}) - \nabla c(\mathbf{w})) \cdot (\mathbf{v} - \mathbf{w}) > 0$
and therefore the second term is positive.
By \Cref{ass:function}, the left-hand side of \Cref{eq:contradict} is positive,
which is a contradiction.
Therefore all players must have the same strategy in equilibrium.
This proves \Cref{prop:NE-symmetric}.
\end{proof}
  
\begin{proposition}
  \label{prop:Phi-concave}
  $\Phi(\mathbf{s})$ is strictly concave.
\end{proposition}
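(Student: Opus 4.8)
The plan is to decompose $\Phi$ into a sum of single-variable ``revenue'' terms plus a ``cost'' term, show that each piece is concave, and then upgrade to strict concavity using \Cref{ass:function}. Write $\Phi(\mathbf{s}) = \sum_{x=1}^m g_x(s_x) - n\,c(\mathbf{s}/n)$, where $g_x(s_x) = (1-\tfrac1n)\int_0^{s_x} p_x(t)\,\mathrm{d}t + \tfrac1n u_x(s_x)$ and $p_x = u_x/s_x$. Since $\nabla\Phi = (\phi_x)_{x=1}^m$ as in \Cref{eq:equilibrium-marginal-payoff}, each $g_x$ is differentiable with $g_x'(s_x) = (1-\tfrac1n)p_x(s_x) + \tfrac1n u_x'(s_x)$.

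First I would establish concavity of the separable part. I avoid second derivatives, since \Cref{ass:function} only grants differentiability of $u_x$, and instead argue through monotonicity of first derivatives: $p_x$ is non-increasing (noted after \Cref{ass:function}), and $u_x'$ is non-increasing because $u_x$ is concave and differentiable. Hence $g_x'$, being a nonnegative combination of two non-increasing functions, is non-increasing, so each $g_x$ is concave. Moreover, if $u_x$ is strictly concave then $u_x'$ is strictly decreasing, which makes $g_x'$ strictly decreasing and $g_x$ strictly concave. For the cost term, $\mathbf{s}\mapsto\mathbf{s}/n$ is linear and $c$ is convex, so $c(\mathbf{s}/n)$ is convex and $-n\,c(\mathbf{s}/n)$ is concave; it is strictly concave precisely when $c$ is strictly convex.

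Having shown $\Phi$ is a sum of concave functions, and therefore concave, the remaining---and main---step is to promote this to strict concavity on the domain $n\Delta^{m-1}$. The key observation is that $n\Delta^{m-1}$ lies in the hyperplane $\mathbf{1}^{\mathrm{T}}\mathbf{s}=n$, so any two distinct points differ by a direction $\mathbf{v}\neq\mathbf{0}$ with $\mathbf{1}^{\mathrm{T}}\mathbf{v}=0$; such a $\mathbf{v}$ necessarily has at least two nonzero entries. This is exactly why single-variable strictness of most of the $g_x$ suffices even though the unconstrained $\Phi$ need not be strictly concave. I would then split into the two cases of \Cref{ass:function}. If all but one $u_x$ are strictly concave, let $x_0$ be the possible exceptional index; any feasible direction $\mathbf{v}$ has a nonzero entry $v_{x_1}$ with $x_1\neq x_0$, so the strictly concave term $g_{x_1}$ varies nontrivially along the segment while all other terms are concave, forcing strict concavity along $\mathbf{v}$. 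If instead $c$ is strictly convex, the cost term alone is strictly concave along every direction, and adding the concave $g_x$ preserves strictness.

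The main obstacle to watch is precisely this last gluing step: strict concavity of a sum in which only some summands are strict does not hold on all of $\mathbb{R}^m$, and the argument genuinely relies on the simplex constraint to rule out directions concentrated on the single non-strict coordinate. I would make this rigorous by carrying the strict inequality through the definition of concavity along the segment joining two distinct feasible points, rather than via a Hessian, thereby also sidestepping the need for twice-differentiability of $u_x$. This proves \Cref{prop:Phi-concave}.
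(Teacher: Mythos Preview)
Your proof is correct and follows essentially the same decomposition as the paper: show each single-variable revenue term $g_x$ is concave (the paper does this by showing $P_x(s_x)=\int_0^{s_x}p_x$ is dominated by its tangent lines, you by monotonicity of $g_x'$) and then combine with convexity of $c$. Your explicit use of the simplex constraint $\mathbf{1}^{\mathrm{T}}\mathbf{v}=0$---forcing at least two nonzero coordinates in any feasible direction---to handle the ``all but one $u_x$ strictly concave'' case is in fact more careful than the paper's own proof, which passes from per-coordinate (strict) concavity to strict concavity of $\Phi$ on $n\Delta^{m-1}$ with only the terse ``By \Cref{ass:function}''.
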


\begin{proof}
Let $P_x(s_x) = \int_0^{s_x} p_x(t) ~\mathrm{d}t$.
Since $P_x(s_x)$ is a differentiable real function with a convex domain,
it is (strictly) concave if and only if it is globally (strictly) dominated by its linear expansions:
$\forall s_0 > 0, \forall s_x \ge 0, s_x \ne s_0$,
$P_x(s_x) - [P_x(s_0) + p_x(s_0) (s_x - s_0)] = \int_{s_0}^{s_x} (p_x(t) - p_x(s_0))~\mathrm{d}t \le 0$.
This is true because $p_x(s_x)$ is (decreasing) non-increasing.
Because $u_x(s_x)$ is also (strictly) concave, $u_x / n + (1 - 1/n) P_x$ is (strictly) concave.
Therefore, the first term of $\Phi(\mathbf{s})$ in \Cref{eq:potential}
is (strictly) concave on the non-negative cone $\mathbb{R}_{\ge 0}^{m}$.
Because the simplex $n \Delta^{m-1}$ is a convex subset of $\mathbb{R}_{\ge 0}^{m}$,
the first term of $\Phi(\mathbf{s})$ is also (strictly) concave on $n \Delta^{m-1}$.
By \Cref{ass:function}, $\Phi(\mathbf{s})$ is strictly concave.
This proves \Cref{prop:Phi-concave}.
\end{proof}

\begin{theorem}
  \label{thm:NE-unique}
  $\mathcal{G}$ has a unique NE,
  which is symmetric and the aggregate strategy maximizes $\Phi(\mathbf{s})$.
\end{theorem}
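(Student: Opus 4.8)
The plan is to combine the four preceding propositions by showing that the equilibrium conditions collapse, at a symmetric profile, onto the first-order conditions for maximizing $\Phi$. Existence of a NE is already guaranteed by \Cref{prop:NE-exist}, and \Cref{prop:NE-symmetric} forces any NE $\mathbf{S}^*$ to be symmetric, so I may write $\mathbf{s}_i^* = \mathbf{s}^*/n$ for every player $i$, where $\mathbf{s}^* = \sum_{i=1}^n \mathbf{s}_i^*$ is the aggregate strategy. The task then reduces to pinning down $\mathbf{s}^*$ uniquely.

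First I would observe that at a symmetric profile, substituting $s_{ix}^* = s_x^*/n$ into the marginal payoff \Cref{eq:marginal-player-payoff} recovers exactly $\partial u_i/\partial s_{ix} = \phi_x(\mathbf{s}^*)$, the equilibrium marginal payoff defined in \Cref{eq:equilibrium-marginal-payoff}, since the cost term $\partial_x c(\mathbf{s}_i^*)$ evaluates to $\partial_x c(\mathbf{s}^*/n)$. Consequently the KKT system \Cref{eq:kkt} for player $i$, which characterizes its best response because the per-player problem is convex, becomes after this substitution and after rescaling the constraint $\sum_x s_{ix} = 1$ to $\sum_x s_x = n$ the system $\phi_x(\mathbf{s}^*) + \lambda_x - \nu = 0$, $\lambda_x \ge 0$, $s_x^* \ge 0$, $s_x^* \lambda_x = 0$, $\sum_x s_x^* = n$. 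Because every player shares the same strategy at a symmetric NE and faces the identical optimization problem, their multipliers coincide and a single common pair $(\boldsymbol{\lambda}, \nu)$ suffices.

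Next I would recognize this system, using the identity $\nabla \Phi = (\phi_x)_{x=1}^m$, as precisely the KKT conditions for the convex program of maximizing $\Phi(\mathbf{s})$ over the rescaled simplex $n \Delta^{m-1} = \{\mathbf{s} \ge 0 : \mathbf{s} \cdot \mathbf{1} = n\}$. By \Cref{prop:Phi-concave} the objective is strictly concave on this compact convex set, so a maximizer exists and is unique; strict feasibility yields strong duality through Slater's condition, so the KKT conditions are both necessary and sufficient and admit this unique maximizer as their only solution.

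The conclusion then follows. The aggregate strategy $\mathbf{s}^*$ of every NE solves this KKT system and therefore equals the unique maximizer of $\Phi$, so all NEs share the same aggregate strategy; being symmetric, each NE is determined entirely by its aggregate through $\mathbf{s}_i^* = \mathbf{s}^*/n$, and the NE is unique. The step I expect to be most delicate is the bookkeeping that converts the per-player KKT conditions \Cref{eq:kkt} into the aggregate conditions for $\Phi$: one must check that the substitution $s_{ix}^* = s_x^*/n$ makes the marginal-payoff term match $\phi_x$ exactly, that the cost gradient reduces as claimed, and that rescaling the unit simplex to $n \Delta^{m-1}$ preserves the complementary-slackness structure without introducing spurious solutions.
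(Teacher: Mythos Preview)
Your proposal is correct and follows essentially the same approach as the paper: at a symmetric NE the per-player KKT conditions collapse to the KKT system \Cref{eq:kkt-new} for maximizing $\Phi$ over $n\Delta^{m-1}$, which by \Cref{prop:Phi-concave} has a unique solution. The only cosmetic difference is that the paper obtains the aggregate multipliers by summing the $n$ per-player KKT systems and setting $\boldsymbol{\lambda}=\sum_i \boldsymbol{\lambda}_i/n$, whereas you argue directly that the multipliers coincide across players by symmetry; both routes yield the same aggregate system.
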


\begin{proof}
From \Cref{prop:NE-exist,prop:NE-symmetric}, $\mathcal{G}$ has NE, which are symmetric.
For a symmetric NE $\mathbf{S}^*$ with aggregate strategy $\mathbf{s}^*$,
we have player strategy $\mathbf{s}_i^* = \mathbf{s}^* / n$,
and marginal player payoffs in invested markets are the same for all players:
$\nu_i = \nu, \forall i$.
Note that $\partial u_i / \partial s_{ix} = p_x(s_x^*) + p'_x(s_x^*) s_x^*/n
- \partial_x c(\mathbf{s}^*/n) = \phi_x(\mathbf{s}^*)$.
Summing up \Cref{eq:kkt} for all $i \in N$,
and let $\boldsymbol{\lambda} = \sum_{i=1}^n \boldsymbol{\lambda}_i / n$,
we see that $\mathbf{s}^*$ satisfies:
\begin{equation}
  \label{eq:kkt-new}
  \begin{aligned}
    &\nabla \Phi + \boldsymbol{\lambda} - \nu \mathbf{1} = 0 && \text{(saddle point)} \\
    &\mathbf{s} \ge 0                       &&\text{(primal constraint 1)} \\
    &\boldsymbol{\lambda} \ge 0             &&\text{(dual constraint)} \\
    &\mathbf{s} \circ \boldsymbol{\lambda} = 0     &&\text{(complementary slackness)} \\
    &\mathbf{s} \cdot \mathbf{1} = n        &&\text{(primal constraint 2)}
  \end{aligned}
\end{equation}
Since the potential function $\Phi(\mathbf{s})$ has a compact domain $n \Delta^{m-1}$,
by \Cref{prop:Phi-concave}, it has a unique maximal point $\mathbf{s}^+$.
By the KKT theorem, $\mathbf{s}^+$ is the solution set of the KKT conditions,
which is exactly \Cref{eq:kkt-new}.
Therefore, $\mathbf{s}^* = \mathbf{s}^+$.
This proves \Cref{thm:NE-unique}.
\end{proof}

Embedded in the proof is a procedure to find the NE, which is to optimize $\Phi(\mathbf{s})$.
Here we give better insight into this procedure, for the case where the cost function is separable:
$c(\mathbf{v}) = \sum_{x=1}^m c_x(v_x)$.
Since $u_x(s_x)$ is a uni-variate differentiable (strictly) concave function, 
$u'_x(s_x)$ is (decreasing) non-increasing.
Because $p_x(s_x)$ and $-c_x'(v_x)$ is also (decreasing) non-increasing,
$\phi_x(s_x) = u'_x(s_x) / n + (1 - 1/n) p_x(s_x) - c_x'(s_x/n)$ is (decreasing) non-increasing.
Define inverse function $\phi_x^{-1}: \mathbb{R}_{\ge 0} \to \mathbb{R}_{\ge 0}$,
so that $\phi_x^{-1}(\nu) = 0$ for $\nu > \phi_x^{-1}(0)$.
The function is non-increasing, and decreasing for $\nu \le \phi_x^{-1}(0)$.
Then the equilibrium satisfies:
\begin{equation}
  \label{eq:equilibrium}
  s_x^* = \phi_x^{-1}(\nu),\quad\forall x \in E
\end{equation}
Since total investment equals the number of players, $\sum_{x=1}^m s_x = n$,
marginal player payoff in invested markets $\nu$ is determined by:
\begin{equation}
  \label{eq:marginal-bound}
  \sum_x \phi_x^{-1}(\nu) = n
\end{equation}
Because the left-hand side of \Cref{eq:marginal-bound} is decreasing
for $\nu \le \max_x \phi_x^{-1}(0)$ where the left-hand side is positive,
the equation gives a unique solution $\nu$.
Thus, \Cref{eq:equilibrium} gives a unique $\mathbf{s}^*$.
Figure 1 shows this process graphically.

\begin{figure}[tb]
  \label{fig:determination}
  \centering
  \includegraphics[width=0.7\linewidth]{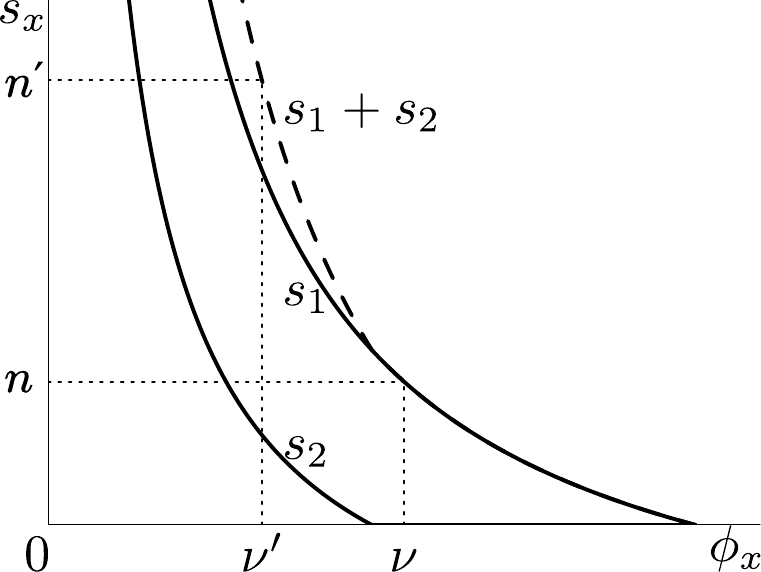}
    \caption{
      Characterization of the Nash equilibrium if cost is separable.
      With $n$ firms, marginal payoff $\nu$ in a market at equilibrium
      is determined by $\sum_{x=1}^m s_x(\nu) = n$.
      Equilibrium allocation in each market can then be determined by $s_x^* = s_x(\nu)$.
  }
\end{figure}

\section{Stability and Learning}
\label{sec:stability}

In this section we prove the global asymptotic stability of the NE under a myopic learning rule
called the gradient adjustment process, and discuss player learning of the NE under other dynamics.
The gradient adjustment process \cite{Arrow1960} is a heuristic learning rule
where players adjust their strategies according to the local gradient of their payoff functions,
projected onto the tangent cone of player strategy space.
Formally, gradient adjustment process is a dynamical system:
\begin{equation}
  \label{eq:gradient-process}
  \frac{\mathrm{d} \mathbf{s}_i}{\mathrm{d} t} =
  P_{T(\mathbf{s}_i)} \nabla_iu_i(\mathbf{s}_i; \mathbf{s}_{-i}),\quad \forall i
\end{equation}
Here $\nabla_i$ denotes the gradient with respect to player strategy $\mathbf{s}_i$,
$T(\mathbf{s}_i)$ is the tangent cone of player strategy space $S_i$ at point $\mathbf{s}_i$,
and $P$ is the projection operator.
For all interior points of $S_i$, $P_{T(\mathbf{s}_i)}$ is simply the centering matrix:
$M_1 = I - \mathbf{1} \mathbf{1}^\text{T} /m$.

\begin{assumption}
  \label{ass:advanced}
  For all $x \in E$, $u_x(s_x)$ is continuously differentiable;
  $c(\mathbf{v})$ is quadratic.
\end{assumption}

\begin{theorem}
  \label{thm:NE-stable}
  With \Cref{ass:advanced},
  the NE of $\mathcal{G}$ is globally asymptotically stable under the gradient adjustment process.
\end{theorem}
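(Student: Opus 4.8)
The plan is to exhibit a Lyapunov function assembled from the potential $\Phi$ and a penalty on inter-player asymmetry, and then to invoke LaSalle's invariance principle on the compact strategy space $S$. The reason \Cref{ass:advanced} sharpens the cost to a quadratic is that a quadratic $c$ has a constant, positive-semidefinite Hessian $Q = \nabla^2 c$, so $\partial_x c$ is affine and hence commutes with averaging over players. Writing $\mathbf{s} = \sum_i \mathbf{s}_i$ for the aggregate and $\mathbf{d}_i = \mathbf{s}_i - \mathbf{s}/n$ for player $i$'s deviation from the symmetric profile, so that $\sum_i \mathbf{d}_i = 0$ and $\mathbf{d}_i \cdot \mathbf{1} = 0$, comparing \Cref{eq:marginal-player-payoff} with \Cref{eq:equilibrium-marginal-payoff} gives the decomposition
\begin{equation}
  \nabla_i u_i = \nabla\Phi(\mathbf{s}) + \left(D(\mathbf{s}) - Q\right)\mathbf{d}_i, \qquad D(\mathbf{s}) = \operatorname{diag}\left(p'_x(s_x)\right) \preceq 0,
\end{equation}
where $D(\mathbf{s}) \preceq 0$ because each $p_x$ is non-increasing. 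Summing over $i$ then yields the key identity $\sum_i \nabla_i u_i = n\,\nabla\Phi(\mathbf{s})$: along the aggregate, the process is projected gradient ascent on the potential.

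With this in hand I would take the Lyapunov candidate
\begin{equation}
  V(\mathbf{S}) = \left[\Phi(\mathbf{s}^*) - \Phi(\mathbf{s})\right] + \frac{1}{2}\sum_{i=1}^n \left\|\mathbf{d}_i\right\|^2,
\end{equation}
where $\mathbf{s}^*$ is the unique maximizer of $\Phi$ from \Cref{thm:NE-unique}. By \Cref{prop:Phi-concave} the first bracket is nonnegative and vanishes only when $\mathbf{s} = \mathbf{s}^*$, and the second term vanishes only when all players coincide; hence $V \ge 0$ with equality exactly at the NE $\mathbf{S}^*$. Differentiating along an interior trajectory, where $P_{T(\mathbf{s}_i)}$ reduces to the centering matrix $M_1$, and using the decomposition together with $\sum_i \mathbf{d}_i = 0$ to cancel cross terms, I expect
\begin{equation}
  \dot V = -\,n\,\nabla\Phi^{\mathrm{T}} M_1 \nabla\Phi + \sum_{i=1}^n \mathbf{d}_i^{\mathrm{T}}\left(D(\mathbf{s}) - Q\right)\mathbf{d}_i \le 0,
\end{equation}
since $M_1 \succeq 0$ and $D(\mathbf{s}) - Q \preceq 0$. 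The first term vanishes only when the projected gradient $M_1\nabla\Phi$ is zero, i.e. at the constrained maximum $\mathbf{s}^*$; the second vanishes only when every $\mathbf{d}_i = 0$, which I would extract from \Cref{ass:function}: if all but one $u_x$ are strictly concave, then $p'_x < 0$ in all but one coordinate, so $\mathbf{d}_i^{\mathrm{T}} D \mathbf{d}_i = 0$ forces all but one entry of $\mathbf{d}_i$ to vanish and $\mathbf{d}_i \cdot \mathbf{1} = 0$ then forces $\mathbf{d}_i = 0$; alternatively, strict convexity of $c$ makes $Q$ positive definite on $\mathbf{1}^{\perp}$. Thus $\dot V = 0$ only at $\mathbf{S}^*$.

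The hard part will be the boundary of the simplex, where $P_{T(\mathbf{s}_i)}$ projects onto the tangent cone rather than the tangent space $M_1$ and varies from player to player, so the clean cancellation of cross terms via $\sum_i \mathbf{d}_i = 0$ no longer applies directly. I would control this through the Moreau decomposition $v = P_{T(\mathbf{s}_i)}(v) + P_{N(\mathbf{s}_i)}(v)$ into orthogonal tangent- and normal-cone parts: the normal-cone component lies in the polar of the tangent cone, and because the Lyapunov gradient $-\nabla\Phi + \mathbf{d}_i$ pairs with feasible (tangent) directions pointing toward $\mathbf{S}^*$, the discarded normal-cone corrections can only decrease $V$ further, preserving $\dot V \le 0$ on all of $S$ with equality only at $\mathbf{S}^*$. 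Finally, since $V$ is continuous on the compact set $S$ and the largest invariant subset of $\{\dot V = 0\}$ is $\{\mathbf{S}^*\}$, LaSalle's invariance principle yields that every trajectory converges to $\mathbf{S}^*$, establishing global asymptotic stability.
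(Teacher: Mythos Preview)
Your approach is essentially the paper's: the same Lyapunov function $V=[\Phi(\mathbf{s}^*)-\Phi(\mathbf{s})]+\text{const}\cdot\sum_i\|\mathbf{s}_i-\mathbf{s}/n\|^2$, the same use of the quadratic-cost assumption to make $\sum_i\nabla_i u_i=n\nabla\Phi$, and the same splitting of $\dot V$ into a variance-of-$\phi_x$ piece and a $\sum_i\mathbf{d}_i^{\mathrm T}(D-Q)\mathbf{d}_i$ piece, each nonpositive with equality only at the NE. Your decomposition $\nabla_i u_i=\nabla\Phi+(D-Q)\mathbf{d}_i$ and the cancellation of cross terms via $\sum_i\mathbf{d}_i=0$ is a cleaner packaging of exactly the computations the paper carries out componentwise; the boundary is handled informally in both places (the paper simply drops coordinates with $s_x=0$, while you sketch a Moreau/normal-cone argument), and your final appeal to LaSalle is not doing extra work since you already have $\dot V<0$ off $\mathbf{S}^*$.
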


\begin{proof}
  To prove that \Cref{eq:gradient-process} is globally asymptotically stable,
  we show that the following function is a global Lyapunov function of the dynamical system,
  i.e. a function that is positive-definite, continuously differentiable,
  and has negative-definite time derivative:
  \begin{equation}
    \label{eq:Lyapunov}
    V(\mathbf{S}) = \Phi_0(\mathbf{s}) + R(\mathbf{S})
    = \left(\Phi(\mathbf{s}^*) - \Phi(\mathbf{s})\right) + \|\mathbf{S} - \bar{\mathbf{S}}\|^2
  \end{equation}
  Here, $\bar{\mathbf{S}} = (\mathbf{s}/n)_{i=1}^n$ is the symmetrized strategy profile.

  Since $\mathbf{s}^*$ is the unique maximal point of $\Phi(\mathbf{s})$,
  this means $\Phi_0(\mathbf{s}) \ge 0$ with equality only at $\mathbf{s} = \mathbf{s}^*$,
  so $\Phi_0(\mathbf{s})$ is positive-definite.
  We add a regularization term $R(\mathbf{S}) = \|\mathbf{S} - \bar{\mathbf{S}}\|^2$,
  which equals zero when $\mathbf{S} = \bar{\mathbf{S}}$ and is positive otherwise.
  Because $\mathbf{S}^* = \bar{\mathbf{S}}^*$,
  we have $V(\mathbf{S}) \ge 0$, with equality only at $\mathbf{S} = \mathbf{S}^*$.
  Therefore, $V(\mathbf{S})$ is positive-definite.
  
  Note that $\nabla_i \Phi_0(\mathbf{s}) = - \nabla_i \Phi(\mathbf{s})
  = - \nabla \Phi(\mathbf{s}) = (-\phi_x(\mathbf{s}))_{x=1}^m$.
  With \Cref{ass:advanced}, every $\phi_x(\mathbf{s})$ is continuous,
  so $\Phi_0(\mathbf{s})$ is continuously differentiable with respect to $\mathbf{S}$.
  With some derivation, we have $\nabla R(\mathbf{S}) = 2 (\mathbf{S} - \bar{\mathbf{S}})$,
  which is apparently continuous.
  Therefore, $V(\mathbf{S})$ is continuously differentiable.

  To simplify discussion, let $\mathbf{S}$ be an interior point of $S = n \Delta^{m-1}$,
  and we study the dynamics of $\Phi_0(\mathbf{s})$ and $R(\mathbf{S})$ separately.
  With \Cref{eq:gradient-process}, the time derivative of $\Phi_0(\mathbf{s})$ is:
\[    \frac{\mathrm{d} \Phi_0(\mathbf{s})}{\mathrm{d} t}
    = -\nabla\Phi(\mathbf{s}) \cdot \frac{\mathrm{d} \mathbf{s}}{\mathrm{d} t}
    = - \sum_{x=1}^m \sum_{i=1}^n \left( \frac{\partial u_i}{\partial s_{ix}} -
      \frac{1}{m} \sum_{y=1}^m \frac{\partial u_i}{\partial s_{iy}}\right) \phi_x \]
  With \Cref{eq:marginal-player-payoff},
  we have $\sum_{i=1}^n \partial u_i / \partial s_{ix} = n p_x(s_x) + p'_x(s_x) s_x
  - \sum_{i=1}^n \partial_x c(\mathbf{s}_i)$.
  With \Cref{ass:advanced}, the cost function can be written as
  $c(\mathbf{v}) = \mathbf{v}^{\text{T}} A \mathbf{v} / 2$,
  where $A$ is a positive semi-definite matrix.
  Note that any linear term $\mathbf{b}^{\text{T}} \mathbf{v}$ can be normalized
  into the production functions $u_x(s_x)$, which becomes $\tilde u_x(s_x) = u_x(s_x) - b_x s_x$.
  Since $\nabla c(\mathbf{v}) = A \mathbf{v}$ is linear, 
  we have $\sum_{i=1}^n \partial_x c(\mathbf{s}_i) = n \partial_x c(\mathbf{s}/n)$,
  and therefore $\sum_{i=1}^n \partial u_i / \partial s_{ix} = n \phi_x(\mathbf{s})$.
  Now we have:
\[    \frac{\mathrm{d} \Phi_0(\mathbf{s})}{\mathrm{d} t}
    = - n \sum_{x=1}^m \left( \phi_x - \frac{1}{m} \sum_{y=1}^m \phi_y \right) \phi_x
    = - mn \left( \overline{\phi^2} - \overline{\phi}^2 \right) \]
  Here, $\overline{\phi} = \sum_{x=1}^m \phi_x / m$ and $\overline{\phi^2} = \sum_{x=1}^m \phi_x^2 / m$.
  Thus, $\overline{\phi^2} - \overline{\phi}^2 \ge 0$,
  with equality if and only if all $\phi_x$ are equal.
  If this is the case, let $\phi_x(\mathbf{s}) = \phi$, where $\phi$ is a constant.
  Then \Cref{eq:kkt-new} is satisfied where $\nu = \phi$ and $\boldsymbol{\lambda} = 0$.
  Therefore $\mathbf{s} = \mathbf{s}^*$.
  So $\mathrm{d} \Phi_0/\mathrm{d} t \le 0$, with equality only at $\mathbf{s} = \mathbf{s}^*$.

  Similarly, the time derivative of $R(\mathbf{S})$ is:
\[    \frac{\mathrm{d} R(\mathbf{S})}{\mathrm{d} t}
    = 2(\mathbf{S} - \bar{\mathbf{S}}) \cdot \frac{\mathrm{d} \mathbf{s}}{\mathrm{d} t}
    = 2 \sum_{i=1}^n \sum_{x=1}^m (s_{ix} - s_x / n) \left( \frac{\partial u_i}{\partial s_{ix}}
      - \frac{1}{m} \sum_{y=1}^m \frac{\partial u_i}{\partial s_{iy}}\right) \]
  Note that $\sum_{y=1}^m \partial u_i / \partial s_{iy}$ does not depend on $x$,
  and $\sum_{x=1}^m (s_{ix} - s_x / n) = 0$, so the last term in the above equation can be dropped.
  With \Cref{eq:marginal-player-payoff}, we have:
\[    \frac{\mathrm{d} R(\mathbf{S})}{\mathrm{d} t}
    = 2 \sum_{i=1}^n \sum_{x=1}^m (s_{ix} - s_x / n) \left(p_x(s_x) + p'_x(s_x) s_{ix}
      - \partial_x c(\mathbf{s}_i) \right)
\]  %
  Note that $p_x(s_x)$ does not depend on $i$, and $\sum_{i=1}^n (s_{ix} - s_x / n) = 0$,
  so $p_x(s_x)$ in the above equation can be dropped.
  The term with $p'_x(s_x) s_{ix}$ expands to
  $2 \sum_{x=1}^m p'_x(s_x) (\sum_{i=1}^n s_{ix}^2 - s_x^2 / n)$,
  which is non-positive because $p'_x(s_x) \le 0$,
  and it equals zero when $\mathbf{s}_i = \mathbf{s}/n$ for all $i$.
  The term with $- \partial_x c(\mathbf{s}_i)$ simplifies to
  $2\sum_{i=1}^n (\mathbf{s}_i - \mathbf{s}/n)^{\text{T}} (-\nabla c(\mathbf{s}_i))$.
  Note that $\nabla c(\mathbf{s}_i) = A \mathbf{s}_i$, this is equal to
  $-2\sum_{i=1}^n (\mathbf{s}_i - \mathbf{s}/n)^{\text{T}} A (\mathbf{s}_i - \mathbf{s}/n)$,
  which is non-positive because $A \ge 0$,
  and it equals zero when $\mathbf{s}_i = \mathbf{s}/n$ for all $i$.
  With \Cref{ass:function}, $\mathrm{d} R / \mathrm{d} t$
  is non-positive and equals zero only when $\mathbf{s}_i = \mathbf{s}/n$ for all $i$,
  that is, $\mathbf{S} = \bar{\mathbf{S}}$.
  Together with the result on $\mathrm{d} \Phi_0/\mathrm{d} t$,
  we have $\mathrm{d} V / \mathrm{d} t \le 0$, with equality only at $\mathbf{S} = \mathbf{S}^*$,
  so $\mathrm{d} V / \mathrm{d} t$ is negative-definite in the interior of $S$.
  
  If $\mathbf{S}$ is a boundary point of $S$,
  we can drop those $x$ where $s_x = 0$.
  With the same argument, we have $\mathrm{d} V/\mathrm{d} t \le 0$,
  with equality only at $\mathbf{S}^*$.
  Therefore, the time derivative $\mathrm{d} V/\mathrm{d} t$ is negative-definite.
  We have thus shown that $V(\mathbf{S})$ is a global Lyapunov function of the dynamical system,
  which immediately implies \Cref{thm:NE-stable}.
\end{proof}

From the proof we see that, during the gradient adjustment process,
the potential function $\Phi(\mathbf{s})$ is increasing as long as
the aggregate strategy $\mathbf{s}$ differs from that of the NE,
and the regularization term $R(\mathbf{S})$ is decreasing as long as
the strategy profile $\mathbf{S}$ is not symmetric.
In other words, this myopic dynamics simultaneously
pushes the player strategies to be more similar to each other and
pushes the aggregate strategy towards the equilibrium.

The stability result in \Cref{thm:NE-stable} is only intended to show that
under a simple and plausible learning rule,
global asymptotic stability of Nash equilibrium is possible in $\mathcal{G}$,
so that the equilibrium can be empirically observed
given real-world perturbations to the competition.
The gradient adjustment process adopted in this paper is not meant to be
the exact learning rule used in real life, which is hard to determine.
But compared with Bayesian or best-response learning rules,
it is less demanding on the players
as it only requires instantaneous local information
instead of complete information of the game or long-term memory of the players.
And even if some players adopt alternative, non-economic learning rules,
the stability of the equilibrium may well be preserved.
For example, some players may simply choose imitative learning~\cite{Roth1995, Fudenberg2009};
in other words, new players copy the more experienced players,
and less successful players copy the more successful ones.
In this case the Nash equilibrium is still the stable focus as all players adopt the same strategy
and the rational payoff-improving players adjust to the equilibrium.
By imitative learning, newcomers save the possibly long process of strategy adjustment
and quickly converge to the equilibrium strategy.
This allows the equilibrium remain stable under an evolving set of players.

We note that the payoff in $\mathcal{G}$ is not diagonally strictly concave,
which is much stronger than \Cref{prop:payoff-concave},
so the uniqueness and stability results cannot follow \cite{Rosen1965}.
But the eigenvalues of the Jacobian $\nabla \left({\mathrm{d} \mathbf{s}}/{\mathrm{d} t}\right)$
are always negative, so local asymptotic stability at the equilibrium is guaranteed
under gradient dynamics with individual-specific adjustment speeds.
We also note that unlike the Cournot game,
$\mathcal{G}$ is not an aggregative game defined in \cite{Selten1970}
or later generalizations \cite{Jensen2010,Cornes2012},
because player strategies are multi-dimensional.
Thus it does not inherit the stability under discrete-time best-response dynamics.
It is also not a potential game and thus does not inherit
the general dynamic stability properties in \cite{Monderer1996b}.
Instead, we provided a ``potential function'' that is a global Lyapunov function
for the gradient dynamics.

\section{The Problem of Social Cost}
\label{sec:efficiency}

In this section we show that
the Nash equilibrium of $\mathcal{G}$ is not socially optimal in general,
and conclude with a takeaway message and hint at a possible fix.
A socially optimal strategy profile maximizes the total income
$u \equiv \sum_{i=1}^n u_i = \sum_{x=1}^m u_x - \sum_{i=1}^n c(\mathbf{s}_i)$,
while per \Cref{thm:NE-unique} the equilibrium maximizes the potential function $\Phi(\mathbf{s})$.
Since $u(\mathbf{S}) \ne \Phi(\mathbf{s})$,
total income is generally not maximized at the NE, thus not socially optimal.
In fact, if total income is maximized,
marginal income $u_x' - \partial_x c(\mathbf{s}_i)$
should be the same for all invested markets of all players.
Instead, at NE, $\phi_x$ is the same for all invested markets.
By \Cref{eq:equilibrium-marginal-payoff},
$\phi_x$ includes the marginal cost at equilibrium
and a weighted average of marginal and average revenue,
with more weight on average revenue as the number of players increases.
This difference implies an inefficiency of the NE in general.

However, the NE can be socially optimal given special forms of $u_x(s_x)$.
For example, if market revenues are power functions of the same order:
$u_x(s_x) = a_x s_x^p, a_x > 0, p \in (0, 1)$,
then total payoff $u = \sum_{x=1}^m a_x s_x^p - \sum_{i=1}^n c(\mathbf{s}_i)$ and
player payoff $u_i = \sum_{x=1}^m a_x s_x^{p-1}s_{ix} - c(\mathbf{s}_i)$.
At social optimum, $u_x' = a_x p s_x^{p-1}$
is a constant for all markets $x \in E$.
Since $\sum_{x=1}^m s_x = n$,
social optimal strategy is $s^\dagger_x = n a_x^{1/(1-p)} / \sum_{y=1}^m a_y^{1/(1-p)}, \forall x \in E$.
Let $u_{ix} = u_x s_{ix} / s_x$. At Nash equilibrium,
${\partial u_{ix}}/{\partial s_{ix}} = a_x \left( (p-1) s_x^{p-2} s_{ix} + s_x^{p-1} \right)$
is a constant for all markets and all players.
This means $\sum_{i=1}^m {\partial u_{ix}}/{\partial s_{ix}} = (p-1+n) a_x s_x^{p-1}$
is a constant for all markets, which gives an aggregate strategy
where $s_x^* = n a_x^{1/(1-p)} / \sum_{y=1}^m a_y^{1/(1-p)}$.
Note that $\mathbf{s}^\dagger = \mathbf{s}^*$,
so the NE is socially optimal in this case.

This phenomenon of difference between cooperative and competitive decisions
has been studied for a long time under different names.
Economic inefficiency \cite{Mill1859,Sidgwick1883}
refers to a situation where total income, or social wealth, is not maximized.
The problem of social cost \cite{Pigou1920,Knight1924}
is the divergence between private and social costs or value.
Later developments include external effect \cite{Meade1952}, rent dissipation \cite{Gordon1954},
market failure \cite{Bator1958}, transaction cost \cite{Coase1960},
and price of anarchy~\cite{Koutsoupias1999, Roughgarden2002}.
\cite{Coase1960} dismissed the discussion of social cost,
calling the reduced social income as transaction cost.
Externality evolved out of external effect,
whose proponents typically call for government regulation, see \cite{Davis1962}.
Institution cost is a generalization of transaction cost, see \cite{Cheung1998}.
Algorithmic game theory uses price of anarchy~\cite{Koutsoupias1999, Roughgarden2002}
and price of stability for this inefficiency of equilibria.

Despite the various terminology, the essence of the problem is the same:
when individuals do not have incentive to maximize the total income,
equilibrium naturally will differ from the optimum set,
which by definition results in less total revenue.
Here we propose the main takeaway in the case of multi-market oligopoly of equal capacity:
if a property is heterogeneous in productivity,
the owner cannot obtain the optimal rent by leasing to multiple tenants
without contracting on their allocation of effort.

\section*{Acknowledgments}

The authors thank Ketan Savla, Sami F. Masri, Juan Carrillo,
Matthew Kahn, Hashem Pesaran, and Geert Ridder of USC for helpful comments.
Research is funded by the National Science Foundation (NSF) Grant 14-524
and, in part, by NSF Grant DMS-1638521.

\bibliographystyle{elsarticle-harv}
\bibliography{game.bib}

\end{document}